\newtheorem{thm}{Theorem}[section]
\newtheorem{lem}[thm]{Lemma}
\theoremstyle{definition}
\newtheorem{defn}{Definition}[section]
\theoremstyle{remark}
\newtheorem{rem}{\bf Remark}[section]
\numberwithin{equation}{section}
\begin{document}

\title[{ Fourier integral operators}]
{Notes on Regularity of Fourier integral operators with symbol in $S^{m}_{0,\delta}$ }

\author{Guangqing Wang$^{1,3}$}
\author{Suixin He$^{*2}$}
\address{$1.$ School of Mathematics and Statistics, Fuyang Normal University, Fuyang, Anhui 236041, P.R.China}
\address{$2.$ School of Mathematics and Statistics, Yili Normal University, Yili, Xinjiang 835000, P.R.China}
\address{$3.$School of Mathematics, Sun Yat-sen University, Guangzhou, Guangdong 510275, P.R.China}
\email{wanggqmath@fynu.edu.cn(G.Wang)}
\email{hesuixinmath@126.com(S.He)}


\thanks{*Corresponding  Author: Suixin He}
\thanks{The first author is supported by Scientific Research Foundation of Education Department of Anhui Province of China (2022AH051320), Doctoral Scientific Research Initiation Project of Fuyang Normal University (2021KYQD0001,2021KYQD0002), University-level Course Project of Fuyang Normal University (2025XTTZKC05), Research Project on Educational and Teaching Reform of Fuyang Normal University (2024JYXM0018) and the second author is supported by Natural Science Foundation of Xinjiang Uygur Autonomous Region(2022D01C734).}
\maketitle

\begin{abstract}
Let $T_{a,\varphi}$ be a Fourier integral operator defined with $a\in S^{m}_{0,\delta}(0\leq\delta<1)$ and $\varphi\in \Phi^{2}$ satisfying the strong non-degenerate condition. We demonstrate that when the order satisfies
$$m\leq-\frac{n}{2}-\frac{n}{p}\delta+\frac{n}{p},$$
the operator $T_{a,\varphi}$ becomes bounded on $L^{p}(\mathbb{R}^n)$ for $2< p<\infty$ and maps $L^{\infty}(\mathbb{R}^n)$ to $BMO(\mathbb{R}^n)$ when $p=\infty$. Furthermore, the derived bound on $m$ is sharp for $L^{p}$ estimates in the case $\delta=0$, and for $(L^{\infty},BMO)$ when $0\leq\delta<1$.
\end{abstract}

{\bf MSC (2010). } Primary 42B20, Secondary 42B37.

{{\bf Keywords}:  Fourier integral operators, Exotic symbols, Endpoint estimates.}

\section{Introduction and main results}

Fourier integral operators (FIOs), first introduced by H\"{o}rmander \cite{HormanderI}, are formally defined as
\begin{eqnarray}\label{df}
T_{a,\varphi}u(x)
&=&\frac{1}{(2\pi)^n}\int_{\mathbb{R}^n} e^{ i \varphi(x,\xi)}a(x,\xi)  \hat{u}(\xi)d\xi,
\end{eqnarray}
where $\varphi\in C^{\infty}(\mathbb{R}^{n}\times\mathbb{R}^n\setminus\{0\})$ is a phase function that is homogeneous of degree 1 in $\xi$ and satisfies some non-degeneracy condition, and $a(x,\xi)$ belongs to the H\"{o}rmander class $S^{m}_{\varrho,\delta}$ with parameters $m\in \mathbb{R}$, $0\leq\varrho,\delta\leq1$. Specifically, $a\in S^{m}_{\varrho,\delta}$ if it satisfies the symbol estimates
\begin{eqnarray*}
|\partial^{\beta}_x\partial^{\alpha}_{\xi}a(x,\xi)|\leq C_{\alpha,\beta}(1+|\xi|)^{m-\varrho |\alpha|+\delta|\beta|},
\end{eqnarray*}
for any multi-indices $\alpha,\beta.$

A central problem in FIOs theory concerns their regularity properties in various function spaces, which has been studied extensively in \cite{Eskin,HormanderI,Hormander2,Tao,Beals,Littman,Miyachi,Beals2,Peral,Greenleaf,Wolfgang,SSS,WCY,SZ,Wolfgang1,Israelsson}. Fundamental $L^{2}$-estimates for FIOs were established by \`{E}skin\cite{Eskin} and H\"{o}rmander\cite{HormanderI} under the condition $\frac{1}{2}<\delta=\varrho\leq1,$ later extended to the full range $0\leq\varrho\leq1$ and $0\leq\delta<1$ by Ferreira et al. \cite{Wolfgang}, achieving sharpness in $m\leq -\min(0,\frac{n}{2}(\varrho-\delta))$. $L^{p}$-regularity was first obtained by Seeger et al. \cite{SSS} for $\varrho=1$, $\delta=0$, and subsequently generalized to $0<\varrho\leq1$ and $0\leq\delta<1$ in \cite{WCY,SZ,Wolfgang1}. Recent work by Israelsson et al. \cite{Israelsson} established $h^{p}\rightarrow L^{p}$ boundedness for FIOs with $\varrho=0$, which forms our starting point:

\begin{thm}{\rm \cite[Proposition 6.2]{Israelsson}}\label{D0}
Let \( T_{a,\varphi}\) be an FIO with amplitude \( a \in S_{0, \delta}^{m} (\mathbb{R}^n) \) for $0\leq\delta<1$ and
\[
m\leq -n \left| \frac{1}{p} - \frac{1}{2} \right| - \frac{n}{2}\delta,
\] and let \( \varphi \in \Phi^2 \) be a strongly non-degenerate (SND) phase function of rank \( \kappa \in[0,n-1]\). Then \( T_{a,\varphi}: h^p(\mathbb{R}^n) \to L^p(\mathbb{R}^n) \) for \( 0 < p < \infty \) and \( T_{a,\varphi}: L^\infty(\mathbb{R}^n) \to \mathrm{bmo}(\mathbb{R}^n) \) when \( p = \infty \). Here  $h^{p}(\mathbb{R}^n)$ and $bmo(\mathbb{R}^n)$ denote the local Hardy space and local BMO space respectively.
\end{thm}

\renewcommand{\thethm}{\thesection.\arabic{thm}}
In this paper our main contribution is to improve the bound on $m$ in the case when $2< p\leq\infty$.
\begin{thm}\label{T1}
 Suppose $2< p\leq\infty, 0\leq\delta<1$ and
$$m\leq-\frac{n}{2}-\frac{n}{p}\delta+\frac{n}{p}.$$
Let \( T_{a,\varphi}\) be an FIO with amplitude function \( a \in S_{0, \delta}^{m} (\mathbb{R}^n) \) and phase function \( \varphi \in \Phi^2 \) satisfying SND condition. Then \( T_{a,\varphi}: L^p(\mathbb{R}^n) \to L^p(\mathbb{R}^n) \) for \( 2 < p < \infty \) and \( T_{a,\varphi}: L^\infty(\mathbb{R}^n) \to \mathrm{BMO}(\mathbb{R}^n) \) when \( p = \infty \).
\end{thm}

\begin{rem}
The bound on $m$ is sharp: For the $L^{p}(2 < p < \infty)$ estimate when $\delta=0$, and for $(L^{\infty},BMO)$ when $0\leq\delta<1$. This coincides with optimal results for pseudo-differential operators (PDOs), see \cite{W,Miyachi1}.
For the case when $0<\varrho\leq1$, the bound on $m$ is probably optimal\cite{SZ,WCY,Yang}.
\end{rem}

\begin{lem}
By adapting our proof strategy for Theorem \ref{T1} and the techniques from \cite[Theorem 1.1]{W}, one can establish $(H^{1},L^{1})$ estimates for FIOs and PDOs with the bound $m\leq -\frac{n}{2}-\frac{n}{2}\delta$ respectively. When $\delta=0$, this bound is sharp also\cite{W,Miyachi1}.
\end{lem}

The $L^{p}$-regularity of these FIOs follows from $(L^{\infty},{\rm BMO})$ estimates via complex interpolation\cite{Fefferman,Stein}. One has to point out that FIOs lack off-diagonal kernel regularity in the limiting case $\varrho=0$. Existing approaches to $(L^{\infty},{\rm BMO})$ estimates rely on duality arguments and interpolation between the $L^{2}$-estimate and the $(H^{p},L^{p})$ estimate for some $0<p<1$ \cite{Miyachi1,Wolfgang1}. We emphasize that weak-type estimates for FIOs and PDOs for $\varrho=0$ remain open, see \cite{Hounie1,Tao}. Our new proof, inspired by pseudo-differential operators(PDOs) techniques in \cite{W,W1}, circumvents these limitations. Our approach may enable us to study the regularity of Fourier integral operators in other spaces. For instance, weighted Lebesgue spaces, mixed-norm Lebesgue spaces and Morrey spaces. We recommend \cite{Haroske,Huang,2Yang} for the latest related developments regarding these spaces.

The paper is organized as follows: Section 2 provides necessary preliminaries, while Section 3 contains detailed proofs.
\section{Notations and Preliminaries}

Given a Lebesgue measurable set \( E \), the Lebesgue measure of \( E \) is denoted by \( |E| \), and the characteristic function of \( E \) is denoted by \( \kappa_{E}(x) \). Positive constants in this article are represented by the letter \( C \), with their values allowed to vary from line to line. For convenience, we use the notation \( A \lesssim B \) as shorthand for \( A \leq CB \).

In this investigation, we deal with following phase functions whose definition is attributed to Ferreira and Staubach \cite{Wolfgang}.

\begin{defn}
\textit{\((\Phi^k)\)}
A real-valued function \( \varphi(x,\xi) \) belongs to the class \( \Phi^k \) if the following conditions hold:
\begin{itemize}
    \item \( \varphi(x,\xi) \in C^\infty(\mathbb{R}^n \times \mathbb{R}^n \setminus \{0\}) \),
    \item \( \varphi(x,\xi) \) is positively homogeneous of degree 1 in the frequency variable \( \xi \),
    \item For any pair of multi-indices \( \alpha \) and \( \beta \) with \( |\alpha| + |\beta| \geq k \), there exists a positive constant \( C_{\alpha,\beta} \) such that:
\[
\sup_{(x,\xi) \in \mathbb{R}^n \times \mathbb{R}^n \setminus \{0\}} |\xi|^{-1 + |\alpha|} |\partial_{\xi}^\alpha \partial_{x}^\beta \varphi(x,\xi)| \leq C_{\alpha,\beta}.
\]
\end{itemize}
\end{defn}

For establishing global regularity results, the following strong non-degeneracy condition is required:

\begin{defn}
\textit{(SND condition)}
A real-valued function \( \varphi(x,\xi) \in C^2(\mathbb{R}^n \times \mathbb{R}^n \setminus \{0\}) \) satisfies the \textit{strong non-degeneracy condition} if there exists a positive constant \( c \) such that:
\[
\left| \det \frac{\partial^2 \varphi(x,\xi)}{\partial x_j \partial \xi_k} \right| \geq c,
\]
for all \( (x,\xi) \in \mathbb{R}^n \times \mathbb{R}^n \setminus \{0\} \).
\end{defn}
Next, we make some preparations for the proof of our main result. The fundamental \( L^{2} \) estimate for FIOs is necessary and will be frequently used in the following statement.

\begin{lem}\cite[Theorem 2.7]{Wolfgang}\label{P1}
Suppose that \( 0 \leq \varrho \leq 1 \), \( 0 \leq \delta < 1 \), and \( m = \min\big(0, \frac{n}{2}(\varrho - \delta)\big) \). Assume that \( a \in S^{m}_{\varrho, \delta} \) and \( \varphi \in \Phi^{2} \) satisfies the strongly non-degenerate condition. Then the FIO defined as in (\ref{df}) extends as a bounded operator on \( L^{2} \).
\end{lem}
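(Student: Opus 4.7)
The plan is to decompose the amplitude dyadically in the frequency variable and run a $TT^{*}$ / Cotlar--Stein argument. Fix $\psi_0\in C_c^\infty(\mathbb R^n)$ with $\psi_0\equiv 1$ near $0$, set $\psi_j(\xi):=\psi_0(2^{-j}\xi)-\psi_0(2^{-j+1}\xi)$ for $j\geq 1$ so that $\sum_{j\geq 0}\psi_j\equiv 1$ and $\mathrm{supp}\,\psi_j\subset\{|\xi|\sim 2^j\}$, and put $a_j(x,\xi):=a(x,\xi)\psi_j(\xi)$, $T_j:=T_{a_j,\varphi}$. On $\mathrm{supp}\,a_j$ the symbol bounds read $|\partial_x^\beta\partial_\xi^\alpha a_j|\lesssim 2^{j(m-\varrho|\alpha|+\delta|\beta|)}$. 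Since the supports of $\psi_j$ and $\psi_k$ are disjoint once $|j-k|$ is large, $T_jT_k^{*}$ vanishes in that range, and the analysis reduces to a uniform-in-$j$ bound on $\|T_j\|_{L^{2}\to L^{2}}$ together with an off-diagonal estimate on $\|T_j^{*}T_k\|$ that decays in $|j-k|$.

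For the diagonal, the kernel of $T_jT_j^{*}$ is
\[
K_j(x,y)=\frac{1}{(2\pi)^n}\int_{\mathbb R^n} e^{i(\varphi(x,\xi)-\varphi(y,\xi))}\,a_j(x,\xi)\,\overline{a_j(y,\xi)}\,d\xi.
\]
The SND hypothesis together with the $\Phi^{2}$ bounds yields $|\nabla_\xi(\varphi(x,\xi)-\varphi(y,\xi))|\gtrsim|x-y|$, and $N$ integrations by parts in $\xi$---each $\xi$-derivative of $a_j$ costing $2^{-j\varrho}$, each of $\nabla_\xi\varphi$ only $2^{-j}$---produce the non-stationary phase bound
\[
|K_j(x,y)|\lesssim 2^{j(2m+n)}\bigl(1+2^{j\varrho}|x-y|\bigr)^{-N}.
\]
Schur's test then gives $\|T_jT_j^{*}\|\lesssim 2^{j(2m+n(1-\varrho))}$.

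In the range $\varrho<\delta$, the sharp choice $m=\frac{n}{2}(\varrho-\delta)$ renders $\|T_j\|\lesssim 2^{jm}$ summable, so Cotlar--Stein finishes the proof. In the range $\varrho\geq\delta$, where $m=0$, the Schur estimate is too crude by a factor $2^{jn(1-\varrho)}$; this loss is recovered by a secondary dyadic decomposition of $a_j$ at scale $2^{-j\varrho}$ in $\xi$ (in the directions transverse to $\nabla_x\varphi$), followed by the SND-supplied change of variables $\xi\mapsto\nabla_x\varphi(x_\nu,\xi)$, which linearises the phase on each fine piece and reduces it to the balanced model $\varrho=\delta$, $m=0$.

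The principal obstacle is the off-diagonal piece $\|T_j^{*}T_k\|$: its kernel is a genuine double oscillatory integral in $(x,\xi,\eta)$ that, unlike $T_jT_k^{*}$, does not simplify through frequency disjointness. The standard remedy is to invoke Egorov's theorem to rewrite $T_j^{*}T_k$ as a pseudo-differential operator modulo smoothing, at which point the $\xi$-disjoint supports of $a_j$ and $a_k$ force the required decay $\|T_j^{*}T_k\|\lesssim 2^{-\varepsilon|j-k|}$. With both estimates in hand, Cotlar--Stein assembles the desired $L^{2}$ boundedness of $T_{a,\varphi}$.
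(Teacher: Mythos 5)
The paper does not prove Lemma~\ref{P1}; it is quoted verbatim from Dos Santos Ferreira--Staubach \cite[Theorem~2.7]{Wolfgang}. So the comparison is against that proof, and your sketch both departs from it and, as written, contains a genuine gap.

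The arithmetic in your diagonal step does not close. Your Schur bound gives $\|T_jT_j^{*}\|\lesssim 2^{j(2m+n(1-\varrho))}$, i.e. $\|T_j\|\lesssim 2^{j(m+\frac{n}{2}(1-\varrho))}$. Plugging in the threshold $m=\tfrac n2(\varrho-\delta)$ (the case $\varrho<\delta$) yields $\|T_j\|\lesssim 2^{j\frac n2(1-\delta)}$, which \emph{grows} exponentially since $\delta<1$; it is certainly not the summable $2^{jm}$ you assert. The same loss $2^{j\frac n2(1-\varrho)}$ is present when $\varrho\ge\delta$. In other words, the pointwise kernel estimate plus Schur is too crude in \emph{both} regimes, not just $\varrho\ge\delta$, and the second (angular) dyadic decomposition you mention in passing is in fact essential throughout — the ``change of variables $\xi\mapsto\nabla_x\varphi$ on each fine piece'' is not fleshed out to the point of delivering a uniform bound $\|T_j\|_{L^2\to L^2}\le C$, which is the input Cotlar--Stein actually needs. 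The off-diagonal half is also misattributed: $T_j^{*}T_k$ is the composition of two FIOs, not the conjugation of a pseudo-differential operator by an FIO, so Egorov's theorem is not the relevant tool. The genuine mechanism is that $\nabla_x\varphi$ is homogeneous of degree one, so for $|\xi|\sim2^j$, $|\eta|\sim2^k$ dyadically separated the phase $\varphi(x,\eta)-\varphi(x,\xi)$ has $x$-gradient $\sim2^{\max(j,k)}$; integrating by parts in $x$ gains $2^{\max(j,k)(1-\delta)}$ per derivative (the $\delta$-loss coming from $\partial_x a$), and one still needs the uniform diagonal bound to turn this into operator-norm decay. The cited proof of Dos Santos Ferreira--Staubach avoids this Cotlar--Stein program altogether: after localizing in frequency direction, the SND condition permits a change of spatial variable $x\mapsto\nabla_\xi\varphi(x,\xi^\nu)$ that straightens the phase to $x\cdot\xi$ plus a controllable $\Phi^1$ perturbation, reducing the claim to the $L^2$ theory of pseudo-differential operators of class $S^m_{\varrho,\delta}$. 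That reduction is a genuinely different (and, here, more economical) route.
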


We start with a dyadic partition of unity. Let \( A = \{\xi \in \mathbb{R}^n : \frac{1}{2} \leq |\xi| \leq 2\} \) be an annulus, \( \chi_0(\xi) \in C^\infty_0(B(0,2)) \), and define \( \chi_j(\xi) = \chi(2^{-j}\xi) \) for \( j \geq 1 \), where \( \chi(\xi) \in C^\infty_0(A) \). Then we have
\begin{equation}\label{E0}
    \chi_0(\xi) + \sum_{j=1}^{\infty} \chi_j(\xi) = 1 \quad \text{for all} \quad \xi \in \mathbb{R}^n.
\end{equation}

Using this partition of unity, the Fourier integral operator can be decomposed as:
\begin{equation}\label{E1}
    T_{a,\varphi} = T_{0} + \sum_{j=1}^{\infty} T_{j}.
\end{equation}

The kernel of the operator \( T_{j} \) is then given by:
\begin{equation*}
    T_j(x, y) = \int_{\mathbb{R}^n} e^{i(\varphi(x, \xi) - y \cdot \xi)} \chi_j(\xi) a(x, \xi) \, d\xi.
\end{equation*}

Let $\{\xi^\nu_{0}\}_{1\leq\nu\leq J}$ denote collection of unit vectors satisfying
\begin{align*}
  \textrm{(i)} & \quad\quad \left|\xi^\nu_{0} -\xi^\mu_{0}\right|\geq 1; \\
 \textrm{(ii)} & \quad\quad \text{If}~~ \xi\in \mathbb{S}^{n-1},  \text{then there exists a}\;\xi^\nu_{0}\; \text{so that } |\xi-\xi^\nu_{0}|\leq 1,
\end{align*}

Let $\Gamma^\nu_{0}=\{\xi\in\mathbb{R}^n:|\frac{\xi}{|\xi|}-\xi^{\nu}_{0}|\leq1 \}$,
one can construct an associated partition of unity given by functions $\psi^\nu$,  each homogeneous of degree 0 in $\xi$ with $\sum_{\nu=1}^J\psi^\nu(\xi)=1$ for all $\xi\neq0$,  and satisfy $\text{supp} \psi^\nu\subset \Gamma^{\nu}_{0}.$ It is clearly that $J\sim1.$ Write
\begin{equation}\label{Tj}
T_{j}=\sum_{\nu}T^{\nu}_{j},
\end{equation}
where  the operator $ T_j^\nu$ is given by
 \begin{equation}\label{Tjv}
    T_j^\nu u(x)= \int_{{\mathbb{R}^n}}T_j^\nu(x, y)u(y)dy.
 \end{equation}
with its kernel
 \begin{equation}\label{Tjvker0}
    T_j^\nu(x, y)= \int_{{\mathbb{R}^n}}e^{ i( \varphi(x, \xi)-y\cdot\xi)}\chi_j(\xi)a(x,\xi)\psi^\nu(\xi)d\xi.
 \end{equation}
Write
\begin{eqnarray*}
   \varphi(x, \xi)-y\cdot\xi&=&  \langle\nabla_\xi\varphi(x, \xi^{\nu}_{0})-y, \xi\rangle+\varphi(x, \xi)-\langle\nabla_\xi\varphi(x, \xi^{\nu}_{0}), \xi\rangle.
\end{eqnarray*}
Denote $
a_j^\nu(x, \xi)
=\chi_j(\xi)a(x,\xi)\psi^\nu(\xi)e^{ i\cdot2^{j\varrho }[\varphi(x, \xi)-\langle\nabla_\xi\varphi(x, \xi^{\nu}_{0}), \xi\rangle]}.
$
Then the kernel defined in (\ref{Tjvker0}) can be written as
 \begin{equation}\label{Tjvker}
    T_j^\nu(x, y)= \int_{{\mathbb{R}^n}}e^{ i\langle\nabla_\xi\varphi(x, \xi^{\nu}_{0})-y, \xi\rangle}a^{\nu}_j(x, \xi)d\xi.
 \end{equation}
Clearly, $a_j^\nu(x, \xi)$'s support in $\xi$ is contained in
 $$A_{j}\cap\Gamma^{\nu}_{0}=\left\{\xi: 2^{-1}2^{j}<|\xi|<2\cdot2^{j}  \right\}\cap\{\xi\in\mathbb{R}^n:|\frac{\xi}{|\xi|}-\xi^{\nu}_{0}|\leq1 \},$$
 where $A_j=\left\{\xi: 2^{-1}2^{j}<|\xi|<2\cdot2^{j}  \right\}.$ Choosing a suitable coordinates with respect to $\xi$, one can get the following estimate immediately
\begin{equation*}
 \sup_{\xi\in {\mathbb{R}^n}}|\partial^\alpha\psi^\nu(\xi)|\leq C_\alpha2^{-j}~{\rm and}~
\left|\partial^{\beta }_\xi (\varphi(x, \xi)-\langle\nabla_\xi\varphi(x, \xi^{\nu}_{0}), \xi\rangle)\right|\leq C_\beta.
\end{equation*}
for any $|\alpha|\geq0$ and $\beta\geq1$. Leibniz's formula gives the following fundamental estimate for $a_j^\nu(x, \xi)$.
\begin{lem}\label{L0}
Assume $a\in S^{-\frac{n}{2}}_{0,\delta}$ with $0\leq\delta<1 $ and that $\varphi\in \Phi^{2}$ satisfies the strongly non-degenerate condition.  Then, the following estimates hold
\begin{equation}\label{Ec}
 \sup_{\xi}\left|\partial^\alpha_\xi a_j^\nu (x, \xi)\right|\leq C_\alpha2^{-j\frac{n}{2}}.
\end{equation}
\end{lem}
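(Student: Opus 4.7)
The plan is to apply Leibniz's rule to the four-fold product
\begin{equation*}
a_j^\nu(x,\xi) = \chi_j(\xi)\,a(x,\xi)\,\psi^\nu(\xi)\,e^{ih(x,\xi)},
\end{equation*}
where $h(x,\xi) = \varphi(x,\xi) - \langle \nabla_\xi \varphi(x,\xi^\nu_{0}),\xi\rangle$, and to bound the $\xi$-derivatives of each factor separately on the support of $\chi_j\psi^\nu$, where $|\xi|\sim 2^j$. (Because $\varrho = 0$, the prefactor $2^{j\varrho}$ appearing in the definition of $a_j^\nu$ is just $1$.)

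First I would collect the four factor-wise estimates. The dyadic bump satisfies $|\partial^\alpha_\xi \chi_j(\xi)| \lesssim 2^{-j|\alpha|}$. Since $a\in S^{-n/2}_{0,\delta}$ and $\varrho = 0$, the symbol inequality gives $|\partial^\alpha_\xi a(x,\xi)| \lesssim (1+|\xi|)^{-n/2} \lesssim 2^{-jn/2}$ uniformly in $\alpha$ on the support of $\chi_j$; this is where the global factor $2^{-jn/2}$ enters, and it is also the reason why $\delta$ plays no role, since the loss $\delta|\beta|$ only arises from $x$-derivatives, which are not taken here. The angular cutoff $\psi^\nu$, being positively homogeneous of degree zero in $\xi$, obeys $|\partial^\alpha_\xi \psi^\nu(\xi)| \lesssim |\xi|^{-|\alpha|}\lesssim 2^{-j|\alpha|}$ on the same support.

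The main obstacle is the exponential factor. By the Faà di Bruno formula, $\partial^\alpha_\xi e^{ih}$ expands as $e^{ih}$ times a polynomial in the quantities $\{\partial^\gamma_\xi h : 1\leq |\gamma|\leq |\alpha|\}$, so it suffices to bound these phase-derivatives uniformly in $j$. For $|\gamma|=1$ the expression $\partial_\xi\varphi(x,\xi) - \nabla_\xi\varphi(x,\xi^\nu_{0})$ is bounded because $\nabla_\xi\varphi$ is homogeneous of degree zero in $\xi$ and $\varphi\in\Phi^2$. For $|\gamma|\geq 2$ the linear-in-$\xi$ term drops out, while $|\partial^\gamma_\xi\varphi(x,\xi)|\lesssim |\xi|^{1-|\gamma|}\leq C$ on $|\xi|\sim 2^j$ by the $\Phi^2$ hypothesis. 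Hence $|\partial^\alpha_\xi e^{ih(x,\xi)}|\leq C_\alpha$, which is exactly the uniform bound already announced in the excerpt just before the lemma.

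Assembling the four pieces by Leibniz gives
\begin{equation*}
|\partial^\alpha_\xi a_j^\nu(x,\xi)| \lesssim \sum_{\alpha_1+\alpha_2+\alpha_3+\alpha_4=\alpha} 2^{-j|\alpha_1|}\cdot 2^{-jn/2}\cdot 2^{-j|\alpha_3|}\cdot 1 \lesssim 2^{-jn/2},
\end{equation*}
which is the claim. The only genuinely analytic input is the uniform control of the corrected phase $h$, and this is precisely what the narrow-cone subtraction $-\langle\nabla_\xi\varphi(x,\xi^\nu_{0}),\xi\rangle$ is designed to provide.
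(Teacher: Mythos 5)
Your proof is correct and follows essentially the same route as the paper, which simply invokes Leibniz's rule together with the factor-wise $\xi$-derivative bounds on $\chi_j$, $a$, $\psi^\nu$, and the corrected phase stated just before the lemma (the paper cites Stein, SSS, and Ferreira--Staubach for this). The only small gloss is in the $|\gamma|=1$ case: the boundedness of $\nabla_\xi\varphi(x,\xi)-\nabla_\xi\varphi(x,\xi^\nu_0)$ really comes from the mean value theorem applied on the unit sphere inside the narrow cone $\Gamma^\nu_0$ combined with the $\Phi^2$ bound $|\partial^2_\xi\varphi|\lesssim|\xi|^{-1}$, since $\Phi^2$ does not control the first-order derivative $\nabla_\xi\varphi$ directly; but you correctly identify this as the role of the subtraction, so the argument is sound.
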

This estimate is a variant of similar result in \cite[Ch.9 \S 4.5.]{Stein}. See also \cite{SSS} or \cite[Lemma 1.9]{Wolfgang}.

Let $Q=Q(x_{0},l)$ be the cube about $x_{0}$ with radius $l>0$. Now, we give some local estimates with respect to $Q$ for the operators $T_{j}$.
\begin{lem}\label{L1}
Assume $a\in S^{-\frac{n}{2}}_{0,\delta}$ with $0\leq\delta<1 $ and that $\varphi\in \Phi^{2}$ satisfies the strongly non-degenerate condition.  Then, the following estimates hold
\begin{eqnarray}
&&\int_{\mathbb{R}^{n}}|T_{j}(x,y)-T_{j}(z,y)|dy\lesssim 2^{j}l, \quad {\rm all}~ x,z\in Q;\label{E6}
\end{eqnarray}
\end{lem}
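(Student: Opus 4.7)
The plan is to reduce the $L^1_y$-estimate for the kernel difference to an $L^1_y$-estimate for $\nabla_x T_j$ via the mean value theorem, and to obtain the latter by combining Plancherel's theorem with integration-by-parts decay.

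First, I decompose $T_j = \sum_\nu T_j^\nu$ as in (\ref{Tj}), reducing the problem to each cone ($J\sim 1$ summands). Writing $x_s = z + s(x-z)$, the fundamental theorem of calculus yields
\[
T_j^\nu(x,y) - T_j^\nu(z,y) = (x-z)\cdot\int_0^1 \nabla_x T_j^\nu(x_s,y)\,ds,
\]
so since $|x-z|\lesssim l$, it suffices to show $\int_{\mathbb{R}^n}|\nabla_x T_j^\nu(\eta,y)|\,dy \lesssim 2^j$ uniformly for $\eta$ on the segment.

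The kernel $\nabla_x T_j^\nu(\eta,y)$ is the inverse Fourier transform (in $\xi$) of $H(\eta,\xi):=\chi_j(\xi)\psi^\nu(\xi)\,\nabla_x[e^{i\varphi(\eta,\xi)}a(\eta,\xi)]$. Since $|\nabla_x\varphi|\lesssim|\xi|\sim 2^j$ on $\mathrm{supp}\,\chi_j$, $|a|\lesssim 2^{-jn/2}$, and $|\nabla_x a|\lesssim 2^{j\delta-jn/2}$ (which is dominated by $2^{j-jn/2}$), the amplitude $H$ has $L^\infty$-norm $\lesssim 2^{j-jn/2}$ on a support of Lebesgue measure $\sim 2^{jn}$. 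Plancherel's theorem therefore yields
\[
\bigl\|\nabla_x T_j^\nu(\eta,\cdot)\bigr\|_{L^2_y} \lesssim \|H(\eta,\cdot)\|_{L^2_\xi} \lesssim 2^{j-jn/2}\cdot 2^{jn/2} = 2^j.
\]
For spatial localization, I rewrite $\nabla_x T_j^\nu$ in the form analogous to (\ref{Tjvker}) by absorbing the phase-correction $e^{i[\varphi(\eta,\xi)-\langle\nabla_\xi\varphi(\eta,\xi_0^\nu),\xi\rangle]}$ into a modified amplitude, which continues to satisfy Lemma~\ref{L0}-type bounds (with an additional $2^j$ factor coming from the $\nabla_x\varphi$ in the new amplitude). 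Repeated integration by parts in $\xi$ against the linear phase $\langle\nabla_\xi\varphi(\eta,\xi_0^\nu)-y,\xi\rangle$ then produces arbitrary polynomial decay of $|\nabla_x T_j^\nu(\eta,y)|$ in $|y-\nabla_\xi\varphi(\eta,\xi_0^\nu)|$.

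I then split
\[
\int_{\mathbb{R}^n}|\nabla_x T_j^\nu(\eta,y)|\,dy
= \int_{|y-y_\eta^\nu|\leq C} + \int_{|y-y_\eta^\nu|>C},
\]
where $y_\eta^\nu:=\nabla_\xi\varphi(\eta,\xi_0^\nu)$: on the near region (bounded measure) Cauchy--Schwarz combined with the Plancherel bound gives a contribution $\lesssim 2^j$; on the far region the integration-by-parts tails integrate to a manageable remainder. Summing over the $O(1)$ cones $\nu$ and collecting the factor $|x-z|\lesssim l$ gives the claim.

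The main obstacle is the synthesis of the two estimates: the pointwise bound from integration by parts alone is much too weak (the kernel $\nabla_x T_j^\nu$ has peak size $\sim 2^{j(1+n/2)}$, dwarfing the target $2^j$), while Plancherel by itself only gives control on an $L^2$ average. It is precisely the coupling -- $L^2$ control on a set of bounded measure plus rapid decay beyond -- that must be calibrated carefully so that both contributions come out at the scale $2^j$ rather than some worse power.
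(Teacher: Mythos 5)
Your high-level plan --- mean value theorem to reduce to $\int|\nabla_x T_j^\nu(\eta,y)|\,dy$, Plancherel, and the phase-correction device that makes $\xi$-derivatives of the modified amplitude cost no extra powers of $2^j$ --- coincides with the paper's proof, and your $L^2_y$ estimate $\|\nabla_x T_j^\nu(\eta,\cdot)\|_{L^2_y}\lesssim 2^j$ is correct. The gap is in the passage from $L^2_y$ to $L^1_y$. If the near region is $|y-y^\nu_\eta|\leq C$ with $C=O(1)$ (bounded measure), then the far-region contribution obtained from the \emph{pointwise} integration-by-parts bound is
$\int_{|y-y^\nu_\eta|>C} 2^{j(1+n/2)}|y-y^\nu_\eta|^{-2N}\,dy \lesssim 2^{j(1+n/2)}$, off from the target by $2^{jn/2}$. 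Letting $C$ depend on $j$ and optimizing, the near and far contributions balance at $C\sim 2^{jn/(4N-n)}$ and yield $\approx 2^{j(1+n^2/(8N-2n))}$, which for any fixed integer $N$ is $2^{j(1+\epsilon)}$ with $\epsilon>0$ --- never $2^j$. That loss is fatal downstream: the sum $\sum_{2^j<l^{-1}}2^{j(1+\epsilon)}l\sim l^{-\epsilon}$ is unbounded. The culprit is that the pointwise bound picks up the full measure $2^{jn}$ of the $\xi$-support, not its square root.

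The paper avoids the split entirely. After integration by parts with $(1-\partial^2_\xi)^N$, it applies Cauchy--Schwarz against the integrable weight $(1+|y|^2)^{-N}$ and \emph{then} Plancherel to the remaining $L^2_y$ factor:
\[
l\int(1+|y|^2)^{-N}\bigl|\widehat{(1-\partial^2_\xi)^N\nabla_x a^\nu_j(\bar x,\cdot)}(y)\bigr|\,dy
\lesssim l\,\bigl\|(1-\partial^2_\xi)^N\nabla_x a^\nu_j(\bar x,\cdot)\bigr\|_{L^2_\xi}
\lesssim l\,2^{j(1-\frac n2)}\,2^{j\frac n2}=l\,2^j,
\]
and likewise for the second term $I_2$ containing $\langle\nabla_x\nabla_\xi\varphi(x',\xi^\nu_0),\xi\rangle a^\nu_j$. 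Equivalently, $|y-y^\nu_\eta|^{N}\nabla_x T_j^\nu(\eta,\cdot)$ is, up to constants, the Fourier transform of $\partial_\xi^{N}$ applied to the modified amplitude, hence also has $L^2_y$-norm $\lesssim 2^j$; Cauchy--Schwarz then handles the far region in exactly the same way as the near one. The point is that the IBP gain must be harvested in $L^2$ (where the support measure enters as $2^{jn/2}$), not pointwise (where it enters as $2^{jn}$) --- the missing factor $2^{jn/2}$ is precisely your overshoot.
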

\begin{proof}
By the decomposition (\ref{Tj}) and the fact that $J\sim1$, it is sufficient to show that $T^{\nu}_{j}(x,y)$ meets the inequality (\ref{E6}). Direct computation gives
\begin{eqnarray*}
&&\int_{\mathbb{R}^{n}}|T^{\nu}_{j}(x,y)-T^{\nu}_{j}(z,y)|dy\\
&\leq&l\int_{\mathbb{R}^{n}}|\int_{\mathbb{R}^n}e^{ -i\langle y, \xi\rangle}\nabla_{x}a^{\nu}_{j}(\bar{x},\xi)d\xi|dy\\
&&+l\int_{\mathbb{R}^{n}}|\int_{\mathbb{R}^n}e^{-i\langle y, \xi\rangle}\langle\nabla_x\nabla_\xi\varphi(x', \xi^{\nu}_{0}), \xi\rangle a^{\nu}_{j}(x,\xi)d\xi|dy=:I_{1}+I_{2},
\end{eqnarray*}
where $\bar{x}$ and $x'$ denote some points between $x$ and $z$.
By $\varphi\in \Phi^{2}$, $a\in S^{-\frac{n}{2}}_{0,\delta}$ and (\ref{Ec}), It is clear that
\begin{equation}\label{E7}
  \|\partial^\alpha_\xi \nabla_{x}a^{\nu}_{j}(\bar{x},\xi)\|_{L^{\infty}}\lesssim 2^{j(-\frac{n}{2}+1)}~{\rm and}~
  \|\partial^\alpha_\xi \langle\nabla_x\nabla_\xi\varphi(x', \xi^{\nu}_{0}), \xi\rangle a^{\nu}_{j}(x,\xi)\|_{L^{\infty}}\lesssim 2^{j(-\frac{n}{2}+1)}.
\end{equation}

Next we proceed with the estimate of $I_{1}$ only and $I_{2}$ can be estimated by a similar argument. To this end, we write by Integrating by parts
\begin{eqnarray*}
l\int_{\mathbb{R}^{n}}|\int_{{\mathbb{R}^n}}
e^{ -i\langle y, \xi\rangle}\nabla_{x}a^{\nu}_{j}(\bar{x},\xi)d\xi| dy
&=&l\int_{\mathbb{R}^{n}}(1+|y|^{2})^{-N}|\widehat{(1-\partial^{2}_{\xi})^{N}\nabla_{x}a^{\nu}_{j}(\bar{x},\cdot)}(y)|dy.
\end{eqnarray*}
for any positive real number $N>0.$ The Cauchy-Schwarz inequality and the Plancherel theorem give that it is bounded by
\begin{eqnarray*}\label{Ed}
l\bigg(\int_{\mathbb{R}^{n}}|(1-\partial^{2}_{\xi})^{N}\nabla_{x}a^{\nu}_{j}(\bar{x},\xi)|^{2}d\xi\bigg)^{\frac{1}{2}}
&\lesssim&
l2^{j(-\frac{n}{2}+1)}|A_{j}\cap\Gamma^{\nu}_{0}|^{\frac{1}{2}}\lesssim l2^{j}.
\end{eqnarray*}
Here we chose $N>\frac{n}{4}$, and the fact (\ref{E7}) and $|A_{j}\cap\Gamma^{\nu}_{0}|\lesssim 2^{jn}$ are used. So the proof is finished.
\end{proof}

\begin{lem}\label{L2}
Suppose $0<l<1$, $a\in S^{-\frac{n}{2}}_{0,0}$ and that $\varphi\in \Phi^{2}$ satisfies the strongly non-degenerate condition. For any positive integer $N>\frac{n}{2}$ and any positive integer $j$ with $l^{-1}\leq2^{j}$, then the following estimates hold
\begin{eqnarray}
\frac{1}{|Q(x_{0},l)|}\int_{Q(x_{0},l)}|T_{j}u(x)|dx
\lesssim2^{-j\frac{n}{2}(1-\frac{n}{2N})}l^{\frac{n}{2}(\frac{n}{2N}-1)}\|u\|_{L^{\infty}}.\label{E131}
\end{eqnarray}
\end{lem}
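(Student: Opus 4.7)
The plan is to apply Cauchy-Schwarz on $Q$ followed by $L^{2}(Q)$-duality, reducing the problem to an $L^{1}$-estimate for the adjoint $T_{j}^{\ast}$. Indeed,
\[
\frac{1}{|Q|}\int_{Q}|T_{j}u(x)|\,dx\leq |Q|^{-1/2}\|T_{j}u\|_{L^{2}(Q)},
\]
and writing $\|T_{j}u\|_{L^{2}(Q)}$ via duality with $L^{2}(Q)$ and transferring the test function $v$ to $u$ through the adjoint, one reaches
\[
\|T_{j}u\|_{L^{2}(Q)}\leq\|u\|_{L^{\infty}}\sup_{\substack{\|v\|_{L^{2}(Q)}\leq 1\\\mathrm{supp}\,v\subset Q}}\|T_{j}^{\ast}v\|_{L^{1}(\mathbb{R}^{n})}.
\]
The whole task therefore reduces to an estimate of $\|T_{j}^{\ast}v\|_{L^{1}}$ in terms of $l$, $2^{j}$, and the parameter $N$.

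Two ingredients are needed. First, the improved $L^{2}\to L^{2}$ operator norm $\|T_{j}\|_{L^{2}\to L^{2}}\lesssim 2^{-jn/2}$: since $2^{jn/2}\chi_{j}a$ has all of its $S^{0}_{0,0}$-seminorms bounded uniformly in $j$ under the hypothesis $a\in S^{-n/2}_{0,0}$ (the $\xi$-derivatives of $a$ give no loss because $\varrho=0$, while those hitting $\chi_{j}$ or $\psi^{\nu}$ gain factors of $2^{-j}$), Lemma~\ref{P1} applied to the rescaled symbol together with duality yields the same bound for $T_{j}^{\ast}$. Second, spatial decay of the kernel: using the directional decomposition $T_{j}=\sum_{\nu}T_{j}^{\nu}$ from \eqref{Tj} and integrating by parts $2N$ times in $\xi$ together with the symbol estimate of Lemma~\ref{L0}, one obtains
\[
|T_{j}^{\nu}(x,y)|\lesssim 2^{jn/2}(1+|y-z_{\nu}(x)|)^{-2N},\qquad z_{\nu}(x):=\nabla_{\xi}\varphi(x,\xi_{0}^{\nu}),
\]
a decay which is integrable over $\{|y-y^{\ast}|\geq R\}$ precisely when $2N>n$, i.e.\ under the hypothesis $N>n/2$.

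Fix a reference point $y^{\ast}=z_{\nu_{0}}(x_{0})$; by the Lipschitz continuity of $\nabla_{\xi}\varphi$ and the fact that $\{z_{\nu}(x_{0})\}_{\nu}$ is bounded in $\mathbb{R}^{n}$, all $z_{\nu}(x)$ with $x\in Q$ lie within a fixed distance of $y^{\ast}$. Splitting $\mathbb{R}^{n}_{y}$ at the ball $B=B(y^{\ast},R)$, Cauchy-Schwarz gives
\[
\|T_{j}^{\ast}v\|_{L^{1}(B)}\leq|B|^{1/2}\|T_{j}^{\ast}v\|_{L^{2}}\lesssim R^{n/2}2^{-jn/2}\|v\|_{L^{2}},
\]
whereas on $B^{c}$ the kernel decay, combined with Cauchy-Schwarz in $x\in Q$, produces the pointwise bound $|T_{j}^{\ast}v(y)|\lesssim l^{n/2}2^{jn/2}(1+|y-y^{\ast}|)^{-2N}\|v\|_{L^{2}}$, integrating to $\|T_{j}^{\ast}v\|_{L^{1}(B^{c})}\lesssim l^{n/2}2^{jn/2}R^{n-2N}\|v\|_{L^{2}}$. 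Choosing $R$ so that the two contributions balance (namely $R^{2N-n/2}\sim l^{n/2}2^{jn}$) and unwinding the resulting exponents using the constraint $2^{j}l\geq 1$ produces the bound claimed by the lemma. The step I expect to be the main obstacle is the careful bookkeeping in passing from the direction-dependent decay centers $z_{\nu}(x)$ to the single reference $y^{\ast}$: this is legitimate because $J\sim 1$ and the cluster $\{z_{\nu}(x_{0})\}_{\nu}$ has bounded diameter, so the Plancherel-type weighted $L^{2}$-identity used in the kernel bound for each $T_{j}^{\nu}$ survives summation in $\nu$.
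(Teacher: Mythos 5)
Your reduction to an $L^{1}$ estimate for $T_{j}^{\ast}$ via Cauchy--Schwarz and $L^{2}(Q)$ duality is legitimate, your $L^{2}$-operator-norm bound $\|T_{j}\|_{L^{2}\to L^{2}}\lesssim 2^{-jn/2}$ is correct, and the pointwise kernel estimate $|T_{j}^{\nu}(x,y)|\lesssim 2^{jn/2}(1+|y-z_{\nu}(x)|)^{-2N}$ is also correct. The problem is that this pointwise bound is quantitatively too weak for the far region, and the proposed balancing therefore does not produce the bound in the lemma. With your two estimates
\[
l^{-n/2}\|T_{j}^{\ast}v\|_{L^{1}}\lesssim l^{-n/2}\Big(R^{n/2}2^{-jn/2}+l^{n/2}2^{jn/2}R^{n-2N}\Big),
\]
the minimizing $R$ satisfies $R^{2N-n/2}\sim l^{n/2}2^{jn}$ as you say, and the resulting value is
$l^{\frac{n}{2}\left(\frac{n}{4N-n}-1\right)}2^{j\frac{n}{2}\left(\frac{2n}{4N-n}-1\right)}$, whereas the lemma claims $(2^{j}l)^{\frac{n}{2}\left(\frac{n}{2N}-1\right)}$. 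These are genuinely different exponents. For a concrete check take $n=2$, $N=2$: the balanced $R$ is $l^{1/3}2^{2j/3}$ and your bound becomes $l^{-2/3}2^{-j/3}$, while the lemma asks for $l^{-1/2}2^{-j/2}$; the ratio is $(2^{j}/l)^{1/6}$, which is $\geq 1$ and unbounded under $2^{j}l\geq 1$, so no amount of ``unwinding with $2^{j}l\geq 1$'' closes the gap. (Choosing instead $R=(2^{j}l)^{n/(2N)}$ fixes the $B$-term but then the $B^{c}$-term is $2^{jn/2}R^{n-2N}=2^{jn/2}(2^{j}l)^{\frac{n^{2}}{2N}-n}$, which again exceeds the target by an unbounded factor; one can check that with your two estimates no choice of $R$ works.)

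The culprit is the loss of the uncompensated factor $2^{jn/2}$ when you bound $\|T_{j}^{\ast}v\|_{L^{1}(B^{c})}$ by the sup of the kernel times $R^{n-2N}$. What is needed on the far region is not the pointwise decay but the weighted $L^{2}$ (Plancherel) estimate $\|\,|z_{\nu}(x)-\cdot|^{N}\,T_{j}^{\nu}(x,\cdot)\|_{L^{2}(dy)}=\|\partial_{\xi}^{N}a_{j}^{\nu}(x,\cdot)\|_{L^{2}(d\xi)}\lesssim 2^{-jn/2}|A_{j}\cap\Gamma_{0}^{\nu}|^{1/2}\lesssim 1$: applying Cauchy--Schwarz in $y$ over $B^{c}$ against the weight $|z_{\nu}(x)-y|^{-N}$ gives $\int_{B^{c}}|T_{j}^{\nu}(x,y)|\,dy\lesssim R^{n/2-N}$ without any $2^{jn/2}$, hence $\|T_{j}^{\ast}v\|_{L^{1}(B^{c})}\lesssim l^{n/2}R^{n/2-N}\|v\|_{L^{2}}$; then the balance gives $R=(2^{j}l)^{n/(2N)}$ and the target exponent drops out. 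This is precisely the mechanism in the paper's proof, which splits $u=u_{j,1}^{\nu}+u_{j,2}^{\nu}$ at the set $R_{j}^{\nu}$ of radius $\bar{c}(2^{j}l)^{n/(2N)}$, handles the near piece by the $L^{2}$ bound, and handles the far piece by exactly this weighted-$L^{2}$-in-$y$ Cauchy--Schwarz to get a pointwise bound on $T_{j}^{\nu}u_{j,2}^{\nu}(x)$ with no $l^{-n/2}$ loss at all. You mention the ``Plancherel-type weighted $L^{2}$-identity'' in your closing sentence, but the displayed computation does not use it and the exponents show it cannot be avoided.
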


\begin{proof}
By the decomposition (\ref{Tj}) and the fact that $J\sim1$ again, we only show that $T^{\nu}_{j}$ meets the inequality (\ref{E131}).
To this end, fix positive integral $N>\frac{n}{2}$ and
set
\begin{equation}\label{R}
R^{\nu}_{j}=\{y:|\nabla_{\xi}\varphi(x_{0},\xi^{\nu}_{0})-y|\leq\bar{c}l^{\frac{n}{2N}}2^{\frac{jn}{2N}}\},
\end{equation}
where $\bar{c}$ is a large constant independent of $j,l$ and it will be fixed later. Set
\begin{eqnarray*}
u^{\nu}_{j,1}(x)=u(x)\kappa_{R^{\nu}_{j}}(x) \quad{\rm and} \quad u^{\nu}_{j,2}(x)=u(x)-u^{\nu}_{i,1}(x),
\end{eqnarray*}
where $\kappa_{R^{\nu}_{j}}(x)$ is the characteristic function of the rectangle $R^{\nu}_{j}.$
Then one can write
$$T^{\nu}_{j}u(x)=T^{\nu}_{j}u^{\nu}_{j,1}(x)+T^{\nu}_{j}u^{\nu}_{j,2}(x).$$

It is easy to check that $\chi_j(\xi)a(x,\xi)\psi^\nu(\xi)\in S^{0}_{0,0}$ with the bounds $\lesssim2^{-j\frac{n}{2}}$. H\"{o}lder's inequality and $L^{2}$-boundedness of $T^{\nu}_{j}$ imply that
\begin{eqnarray}\label{b11}
\frac{1}{|Q(x_{0},l)|}\int_{Q(x_{0},l)}|T^{\nu}_{j}u^{\nu}_{j,1}(x)|dx\leq l^{-\frac{n}{2}}\|T^{\nu}_{j}u^{\nu}_{j,1}\|_{L^{2}}\nonumber
&\lesssim& 2^{-j\frac{n}{2}}l^{-\frac{n}{2}}\|u^{\nu}_{j,1}\|_{L^{2}}\\
&\lesssim&2^{j\frac{n}{2}(\frac{n}{2N}-1)}l^{\frac{n}{2}(\frac{n}{2N}-1)}\|u\|_{L^{\infty}},
\end{eqnarray}
where the inequality $|R^{\nu}_{j}|\lesssim l^{n}$ is used.

Next, we prove that $T^{\nu}_{j}u^{\nu}_{j,2}(x)$ meets the inequality $\ref{E131}$ as well. Note that if $x\in Q,$ then by the fact that $\varphi\in \Phi^{2}$ and $l\leq1$
$$|\nabla_\xi\varphi(x,\xi^{\nu}_{0})-\nabla_\xi\varphi(x_{0},\xi^{\nu}_{0})|\leq C_{1,1}|x-x_{0}|\leq C_{1,1}l\leq C_{1,1}.$$
For any $y\in~^{c}R^{\nu}_{j}$, one can get further
 \begin{equation*}
|\nabla_\xi\varphi(x_{0},\xi^{\nu}_{0})-y|>\bar{c}l^{\frac{n}{2N}}2^{\frac{jn}{2N}}\geq\frac{\bar{c}}{C_{1,1}}|\nabla_\xi\varphi(x,\xi^{\nu}_{0})-\nabla_\xi\varphi(x_{0},\xi^{\nu}_{0})|.
\end{equation*}
Here, the fact $2^{j}\geq l^{-1}$ is used.
Taking $\bar{c}$ large enough, we therefore have
\begin{eqnarray*}
|\nabla_\xi\varphi(x,\xi^{\nu}_{0})-y|
&\geq &|\nabla_\xi\varphi(x_{0},\xi^{\nu}_{0})-y|-|\nabla_\xi\varphi(x_{0},\xi^{\nu}_{0})-\nabla_\xi\varphi(x,\xi^{\nu}_{0})|\\
&\gtrsim & |\nabla_\xi\varphi(x_{0},\xi^{\nu}_{0})-y|.
\end{eqnarray*}
Inserting this inequality into
$$T_j^\nu u^{\nu}_{j,2}(x)= \int_{^{c}R^{\nu}_{j}}T_j^\nu(x, y)u(y)dy,$$
one can get that for any $x\in Q,$
\begin{eqnarray*}
|T^{\nu}_{j}u^{\nu}_{j,2}(x)|
&\leq& \|u\|_{L^{\infty}}
\int_{^{c}R^{\nu}_{j}}\frac{|\nabla_\xi\varphi(x,\xi^{\nu}_{0})-y|^{N}}{|\nabla_\xi\varphi(x_{0},\xi^{\nu}_{0})-y|^{N}}|T^{\nu}_{j}(x,y)| dy.
\end{eqnarray*}
H\"{o}lder's inequality, Integrating by parts  and Parseval's identity give that
\begin{eqnarray}\label{b12}
|T^{\nu}_{j}u^{\nu}_{j,2}(x)|
&\leq& \|u\|_{L^{\infty}}\big(\int_{^{c}R^{\nu}_{j}}\frac{1}{|\nabla_\xi\varphi(x_{0},\xi^{\nu}_{0})-y|^{2N}} dy\big)^{\frac{1}{2}}
\big(\int_{\mathbb{R}^{n}}\big|\widehat{\partial^{N}_{\xi}a_j^\nu(x, \cdot)}(y
)\big|^{2} dy\big)^{\frac{1}{2}}\nonumber\\
&\lesssim& \|u\|_{L^{\infty}}\big(l^{\frac{n}{2N}}2^{j\frac{n}{2N}}\big)^{\frac{n}{2}-N}
2^{-j\frac{n}{2}}
|A_{j}\cap\Gamma^{\nu}_{0}|^{\frac{1}{2}}
\lesssim2^{j\frac{n}{2}(\frac{n}{2N}-1)}l^{\frac{n}{2}(\frac{n}{2N}-1)}\|u\|_{L^{\infty}},
\end{eqnarray}
where the fact that $|A_{j}\cap\Gamma^{\nu}_{0}|\lesssim 2^{jn}$ and (\ref{Ec}) are used. (\ref{b11}) and (\ref{b12}) imply that $T^{\nu}_{j}$ meets the inequality (\ref{E131}). So the proof is finished.
\end{proof}

\begin{lem}\label{L3}
Suppose that $0<l<1$, $a\in S^{-\frac{n}{2}}_{0,\delta}$ with $0<\delta<1 $ and that $\varphi\in \Phi^{2}$ satisfies the strongly non-degenerate condition.  Then for any $\lambda \geq1$, any positive integer $N>\frac{n}{2}$ and any positive integer $j$ with $l^{-\lambda}\leq2^{j}$, we have
\begin{eqnarray*}
\frac{1}{|Q(x_{0},l)|}\int_{Q(x_{0},l)}|T_{j}u(x)|dx
&\lesssim&\|u\|_{L^{\infty}}\big(l^{\lambda}2^{j\delta}+2^{j\frac{n}{2}(\frac{n}{2N}-2)}l^{\lambda\frac{n}{2}(\frac{n}{2N}-2)}\big).
\end{eqnarray*}
\end{lem}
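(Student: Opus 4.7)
The plan is to adapt the proof of Lemma~\ref{L2}: keep the cone decomposition $T_j=\sum_\nu T^\nu_j$ (so it suffices to bound a single $T^\nu_j$) and the near/far split based on a rectangle around $\nabla_\xi\varphi(x_0,\xi^\nu_0)$, but tune the rectangle's radius and the order of integration by parts to produce the two distinct terms on the right. Explicitly, I would take
\[
R^\nu_j=\bigl\{y:|\nabla_\xi\varphi(x_0,\xi^\nu_0)-y|\leq \bar c\rho\bigr\},\qquad \rho=(l^\lambda 2^j)^{n/(2N)},
\]
and split $u=u_1+u_2$ with $u_1=u\kappa_{R^\nu_j}$. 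The hypothesis $l^{-\lambda}\leq 2^j$ gives $\rho\geq \bar c$, so, exactly as in Lemma~\ref{L2}, the estimate $|\nabla_\xi\varphi(x,\xi^\nu_0)-\nabla_\xi\varphi(x_0,\xi^\nu_0)|\leq C_{1,1}l\leq C_{1,1}$ lets me choose $\bar c$ large enough that $|\nabla_\xi\varphi(x,\xi^\nu_0)-y|\gtrsim|\nabla_\xi\varphi(x_0,\xi^\nu_0)-y|$ for $x\in Q$ and $y\in{}^cR^\nu_j$.

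For the far piece I would run the same Cauchy--Schwarz and Parseval procedure as in Lemma~\ref{L2}, but with the ratio $|\nabla_\xi\varphi(x,\xi^\nu_0)-y|^{2N}/|\nabla_\xi\varphi(x_0,\xi^\nu_0)-y|^{2N}$ inserted and $2N$ (instead of $N$) integrations by parts in $\xi$. Lemma~\ref{L0} still gives $|\partial^{2N}_\xi a^\nu_j|\lesssim 2^{-jn/2}$; together with $|A_j\cap\Gamma^\nu_0|\lesssim 2^{jn}$ this produces $\|\partial^{2N}_\xi a^\nu_j\|_{L^2}\lesssim 1$, and combined with $\bigl(\int_{{}^cR^\nu_j}|\nabla_\xi\varphi(x_0,\xi^\nu_0)-y|^{-4N}dy\bigr)^{1/2}\lesssim \rho^{-2N+n/2}$ it yields
\[
|T^\nu_j u_2(x)|\lesssim \|u\|_{L^\infty}\rho^{-2N+n/2}=\|u\|_{L^\infty}(l^\lambda 2^j)^{\frac{n}{2}(\frac{n}{2N}-2)},
\]
which is the second summand in the claim. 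The reason for doubling the IBP count to $2N$ is precisely to upgrade the exponent from Lemma~\ref{L2}'s $\frac{n}{2}(\frac{n}{2N}-1)$ to $\frac{n}{2}(\frac{n}{2N}-2)$.

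The near piece is the main obstacle. A direct $L^2$ argument, using $\|T^\nu_j\|_{L^2\to L^2}\lesssim 2^{-jn/2}$ (valid for $a\in S^{-n/2}_{0,\delta}$ by Lemma~\ref{P1} since $-n/2\leq -n\delta/2$) together with $\|u_1\|_{L^2}\lesssim\rho^{n/2}\|u\|_{L^\infty}$, yields only $l^{-n/2}2^{-jn/2}\rho^{n/2}\|u\|_{L^\infty}$, which need not sit under $l^\lambda 2^{j\delta}\|u\|_{L^\infty}$ throughout the allowed parameter range. To recover the first term I would Taylor-expand $a(x,\xi)$ in $x$ around $x_0$ up to order $\lfloor\lambda\rfloor$: the frozen-amplitude terms $\partial^\alpha_x a(x_0,\xi)$ are $x$-independent and lie in $S^{-n/2+\delta|\alpha|}_{0,0}$, so each is amenable to a Lemma~\ref{L2}-type bound (the $\delta$-loss disappears once the amplitude is frozen in $x$), while the Taylor remainder is controlled by $|x-x_0|^{\lfloor\lambda\rfloor}|\partial^{\lfloor\lambda\rfloor}_x a|\lesssim l^{\lfloor\lambda\rfloor}2^{-jn/2+j\delta\lfloor\lambda\rfloor}$. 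The constraint $l^{-\lambda}\leq 2^j$ is what lets the resulting contributions collapse to the single factor $l^\lambda 2^{j\delta}$. Organizing this bookkeeping so that the frozen and remainder parts telescope exactly to $l^\lambda 2^{j\delta}$, rather than to a strictly weaker bound, is the only real hurdle; the far piece and the cone reduction are routine adaptations of Lemma~\ref{L2}.
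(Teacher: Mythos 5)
Your far-piece analysis is sound and correctly produces the second term: with $\rho=(l^\lambda 2^j)^{n/(2N)}$ and $2N$ integrations by parts, the computation $\rho^{n/2-2N}=(l^\lambda 2^j)^{\frac{n}{2}(\frac{n}{2N}-2)}$ is right, and the geometric step comparing $|\nabla_\xi\varphi(x,\xi^\nu_0)-y|$ to $|\nabla_\xi\varphi(x_0,\xi^\nu_0)-y|$ goes through exactly as in Lemma~\ref{L2}. (Note that applying Lemma~\ref{L2} verbatim would give exponent $\frac{n}{2}(\frac{n}{2N}-1)$ rather than $\frac{n}{2}(\frac{n}{2N}-2)$; that minor inconsistency is in the paper, and either exponent suffices for the summability in Theorem~\ref{TH1}.)

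The near piece is where your plan breaks down, and the gap is not merely a bookkeeping nuisance. The Taylor remainder of order $k=\lfloor\lambda\rfloor$ satisfies $|R_k|\lesssim l^{k}2^{j(k\delta-n/2)}$, and for this to be $\lesssim l^\lambda 2^{j(\delta-n/2)}$ you would need $l^{k-\lambda}2^{j\delta(k-1)}\lesssim 1$; but $l^{k-\lambda}\geq 1$ (since $k\leq\lambda$ and $l<1$) and $2^{j\delta(k-1)}\geq 1$, so the product is $\geq 1$ rather than $\lesssim 1$ for $k\geq 2$. The Taylor terms $\partial^\alpha_x a(x_0,\xi)$ of order $|\alpha|=k$ live in $S^{-n/2+k\delta}_{0,0}$, so each carries an uncontrolled factor $2^{jk\delta}$ relative to what Lemma~\ref{L2} requires; the extra powers of $2^{j\delta}$ do not telescope against $l^\lambda$ regardless of how you reorganize. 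High-order Taylor expansion in $x$ cannot recover $l^\lambda 2^{j\delta}$ here.

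The paper avoids this entirely by covering $Q(x_0,l)$ with $L^n\lesssim l^{n(1-\lambda)}$ cubes $Q(x_i,l^\lambda)$ of the finer sidelength $l^\lambda$, then on each small cube freezing the amplitude at the center only to first order: defining $T_{j,i}$ with amplitude $a(x_i,\xi)\chi_j(\xi)$, one has $|T_ju(x)-T_{j,i}u(x)|\lesssim\|u\|_\infty|x-x_i|2^{j\delta}\lesssim\|u\|_\infty l^\lambda 2^{j\delta}$ for $x\in Q(x_i,l^\lambda)$, which yields the first term after summing ($L^n l^{\lambda n}/l^n\lesssim 1$). Since $a(x_i,\cdot)\in S^{-n/2}_{0,0}$, Lemma~\ref{L2} applies directly to each $T_{j,i}$ at scale $l^\lambda$ to give the second term. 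The essential idea you are missing is that by shrinking the scale of the cubes to $l^\lambda$, first-order freezing already forces $|x-x_i|\leq l^\lambda$, so no higher-order Taylor expansion (and hence no powers $2^{jk\delta}$ with $k\geq 2$) ever appears.
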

\begin{proof}
Assume $\lambda>1$. Then $l^{\lambda}<l$ since $l<1$. Define $L$ as the smallest integer greater than or equal to $l^{1-\lambda}$, i.e., $L-1<l^{1-\lambda}\leq L$. Using this,$Q(x_{0},l)$ can be covered by $L^{n}$ cubes with side length $l^{\lambda}$, and we have
$$Q(x_{0},l)\subset\cup_{i=1}^{L^{n}}Q(x_{i},l^{\lambda})\subset Q(x_{0},2l)$$
Clearly, the number of such cubes $L^{n}$ less than $2^{n}l^{n(1-\lambda)}$

Define
\begin{eqnarray}\label{0A}
T_{j,i}u(x)
&=&\int_{\mathbb{R}^n} e^{ i \phi(x,\xi)}a(x_{i},\xi) \chi_{j}(\xi) \hat{u}(\xi)d\xi.
\end{eqnarray}
We write
\begin{eqnarray}\label{A}
&&\frac{1}{|Q|}\int_{Q(x_{0},r)}|T_{j}u(x)|dx\nonumber\\
&\leq&\frac{1}{|Q|}\sum\limits_{i=1}^{L^{n}}\bigg(\int_{Q(x_{i},r^{\lambda})}|T_{j}u(x)-T_{j,i}u(x)|dx+\int_{Q(x_{i},r^{\lambda})}|T_{j,i}u(x)|dx\bigg).
\end{eqnarray}
We claim that
\begin{eqnarray}\label{a}
|T_{j}u( x)-T_{j,i}u( x)|\lesssim \|u\|_{L^{\infty}}|x-x_{i}|2^{j\delta}
\end{eqnarray}
and
\begin{eqnarray}\label{b}
\int_{Q(x_{i},r^{\lambda})}|T_{j,i}u(x)|dx
&\lesssim&\|u\|_{L^{\infty}}2^{j\frac{n}{2}(\frac{n}{2N}-2)}l^{\lambda\frac{n}{2}(\frac{n}{2N}-2)}l^{\lambda n}.
\end{eqnarray}
Since $L^{n}\leq2^{n}l^{n(1-\lambda)}$, the desired estimate follows by substituting (\ref{a}) and (\ref{b}) into (\ref{A}). Notice that $l^{\lambda}<1$ and $a(x_{i},\xi)\in S^{-\frac{n}{2}}_{0,\delta}$ with its bound independent of $x_{i}$. Therefore, (\ref{b}) follows from Lemma \ref{L2}.

Now we prove (\ref{a}). For any fixed $x_{i}$, define
\begin{eqnarray*}
\tilde{a}(x,\xi)=a(x,\xi)-a(x_{i},\xi) \quad{\rm and} \quad \tilde{T_j}(x, y)= \int_{{\mathbb{R}^n}}e^{ i( \varphi(x, \xi)-y\cdot\xi)}\tilde{a}(x,\xi)\chi_j(\xi)d\xi.
\end{eqnarray*}
Then
\begin{eqnarray}\label{c}
T_{j}u(x)-T_{j,i}u(x)=\int_{\mathbb{R}^n} \tilde{T_j}(x, y) u(y)dy.
\end{eqnarray}
and
\begin{eqnarray*}
|\partial^{\alpha}_{\xi}\tilde{a}(x,\xi)|\leq C_{\alpha}\langle\xi\rangle^{-\frac{n}{2}+\delta}|x-x_{i}|,
\end{eqnarray*}
for any multi-indices $\alpha.$ Using similar methods, it is straightforward to show
\begin{eqnarray}\label{d}
\int_{\mathbb{R}^n}|\tilde{T_j}(x, y)|dy\lesssim2^{j\delta}|x-x_{i}|.
\end{eqnarray}
Clearly, (\ref{c}) and (\ref{d}) imply (\ref{a}).
If $\lambda=1$, define
\begin{eqnarray*}
T_{j,0}u(x)
&=&\int_{\mathbb{R}^n} e^{ i \phi(x,\xi)}a(x_{0},\xi) \chi_{j}(\xi) \hat{u}(\xi)d\xi..
\end{eqnarray*}
Then the desired estimate can be got by the same argument replacing$T_{j,i}u$ with $T_{j,0}u$. This completes the proof.
\end{proof}

By restricting $\lambda$ to $[1,\frac{1}{1-\delta}]$, the sum in (\ref{E2}) becomes convergent for $l^{-1}\leq2^{j}\leq l^{-\frac{1}{1-\delta}}$. For the case $2^{j}>l^{-\frac{1}{1-\delta}}$, the following lemma is essential.
\begin{lem}\label{L4}
Suppose that $0<l<1$, $a\in S^{-\frac{n}{2}}_{0,\delta}$ with $0<\delta<1 $ and that $\varphi\in \Phi^{2}$ satisfies the strongly non-degenerate condition.  Then any positive integer $N>\frac{n}{2}$ and any positive integer $j$ with $2^{j}>l^{-\frac{1}{1-\delta}}$, we have
\begin{eqnarray*}
\frac{1}{|Q(x_{0},l)|}\int_{Q(x_{0},l)}|T_{j}u(x)|dx
&\lesssim&\|u\|_{L^{\infty}}2^{-j\frac{n}{2}(1-\delta)(1-\frac{n}{2N})}l^{-\frac{n}{2}(1-\frac{n}{2N})}.
\end{eqnarray*}
\end{lem}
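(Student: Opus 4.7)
The plan is to adapt the proof of Lemma~\ref{L2} to the regime $\delta>0$ by enlarging the exceptional set $R_j^\nu$ and accepting a correspondingly weaker $L^2$ bound on $T_j^\nu$, rather than freezing the $x$-variable as in Lemma~\ref{L3}. Using the decomposition (\ref{Tj}) and the fact that $J\sim 1$, it suffices to estimate a single $T_j^\nu$, so I will focus on that throughout.

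The first step is to establish the $L^2$ operator bound $\|T_j^\nu\|_{L^2\to L^2}\lesssim 2^{-jn(1-\delta)/2}$. The amplitude $a\chi_j\psi^\nu\in S^{-n/2}_{0,\delta}$, viewed as a symbol in $S^{-n\delta/2}_{0,\delta}$, has the seminorms $\sup_\xi(1+|\xi|)^{n\delta/2-\delta|\beta|}|\partial^\alpha_\xi\partial^\beta_x(a\chi_j\psi^\nu)|$ controlled by $2^{-jn(1-\delta)/2}$ on the dyadic shell $|\xi|\sim 2^j$, and Lemma~\ref{P1} then yields the claimed bound. This is the natural analogue for $\delta>0$ of the bound $\|T_j^\nu\|_{L^2}\lesssim 2^{-jn/2}$ that is available when $\delta=0$ and used implicitly in the proof of Lemma~\ref{L2}.

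The second step is to set
\[
R_j^\nu:=\bigl\{\,y\in\mathbb{R}^n\,:\,|\nabla_\xi\varphi(x_0,\xi_0^\nu)-y|\le \bar{c}\,(l\,2^{j(1-\delta)})^{n/(2N)}\,\bigr\}
\]
with $\bar{c}$ a large constant. The hypothesis $2^j>l^{-1/(1-\delta)}$ forces $l\,2^{j(1-\delta)}>1$, so the radius of $R_j^\nu$ exceeds $\bar{c}$ and, for $\bar{c}$ sufficiently large, dominates $|\nabla_\xi\varphi(x,\xi_0^\nu)-\nabla_\xi\varphi(x_0,\xi_0^\nu)|\lesssim l$ uniformly in $x\in Q(x_0,l)$. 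Splitting $u=u^\nu_{j,1}+u^\nu_{j,2}$ with $u^\nu_{j,1}=u\kappa_{R^\nu_j}$, the piece $u^\nu_{j,1}$ will be handled by H\"older's inequality and the $L^2$ bound from Step~1, using $|R_j^\nu|^{1/2}\sim (l\,2^{j(1-\delta)})^{n^2/(4N)}$; a direct computation then produces the target $(l\,2^{j(1-\delta)})^{-\frac{n}{2}(1-\frac{n}{2N})}\|u\|_{L^\infty}$. For $u^\nu_{j,2}$ the enlarged radius guarantees $|\nabla_\xi\varphi(x,\xi_0^\nu)-y|\gtrsim|\nabla_\xi\varphi(x_0,\xi_0^\nu)-y|$ off $R_j^\nu$, so the integration by parts/Plancherel argument from the proof of Lemma~\ref{L2}, combined with Lemma~\ref{L0} and $|A_j\cap\Gamma_0^\nu|\lesssim 2^{jn}$, furnishes the same pointwise bound uniformly in $x\in Q(x_0,l)$.

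The main obstacle is Step~1: one must verify that the $L^2$ operator norm of $T_j^\nu$ degrades exactly by the factor $2^{jn\delta/2}$ relative to the $\delta=0$ situation and no more, because the entire gain claimed in Lemma~\ref{L4} rides on this precise loss. Once this bound is secured, the geometry of the remainder of the argument is a verbatim adaptation of the proof of Lemma~\ref{L2}, with the single substitution $2^j\mapsto 2^{j(1-\delta)}$ in the definition of $R_j^\nu$.
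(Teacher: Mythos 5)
Your proposal is correct and follows the same route the paper intends. The paper's one-sentence description says the proof is ``parallel to Lemma~\ref{L3}'', but the two substitutions it then spells out -- replacing $R_j^\nu$ by the ball of radius $\bar{c}\,l^{n/(2N)}2^{jn(1-\delta)/(2N)}$ about $\nabla_\xi\varphi(x_0,\xi_0^\nu)$, and using that $\chi_j a\psi^\nu\in S^{-n\delta/2}_{0,\delta}$ with seminorm $\lesssim 2^{-jn(1-\delta)/2}$ so that $\|T_j^\nu\|_{L^2\to L^2}\lesssim 2^{-jn(1-\delta)/2}$ via Lemma~\ref{P1} -- are exactly the L2-type splitting argument, not the freezing-of-$x$ argument of Lemma~\ref{L3}; you correctly identified the right template and your $R_j^\nu$ agrees with the paper's. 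The step you flagged as the ``main obstacle'' (that the $L^2$ norm of $T_j^\nu$ loses exactly $2^{jn\delta/2}$) is resolved precisely as you say: on $|\xi|\sim 2^j$ one has $(1+|\xi|)^{-n/2+\delta|\beta|}=(1+|\xi|)^{-n\delta/2+\delta|\beta|}(1+|\xi|)^{-n(1-\delta)/2}$, and the second factor $\sim 2^{-jn(1-\delta)/2}$ gives the $S^{-n\delta/2}_{0,\delta}$-seminorm bound, after which Lemma~\ref{P1} applies verbatim; your arithmetic for both pieces then reproduces the target exponent $(l\,2^{j(1-\delta)})^{\frac{n}{2}(\frac{n}{2N}-1)}$.
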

The proof of Lemma (\ref{L4}) is parallel to Lemma (\ref{L3}) after changing the definition of $R^{\nu}_{j}$ to
\begin{equation*}
R^{\nu}_{j}=\{y:|\nabla_{\xi}\varphi(x_{0},\xi^{\nu}_{0})-y|\leq\bar{c}2^{j\frac{n}{2N}(1-\delta)}l^{\frac{n}{2N}}\}.
\end{equation*}
And, the fact is used that $\chi_j(\xi)a(x,\xi)\psi^\nu(\xi)\in S^{-\frac{n}{2}\delta}_{0,\delta}$ with the bounds $\lesssim2^{-j\frac{n}{2}(1-\delta)}$.

For the case $l\geq1$, we have the following lemma.
\begin{lem}\label{L5}
Suppose that $l\geq1$, $a\in S^{-\frac{n}{2}}_{0,\delta}$ with $0\leq\delta<1 $ and that $\varphi\in \Phi^{2}$ satisfies the strongly non-degenerate condition.  Then any positive integer $N>\frac{n}{2}$ and $0<\theta<1-\delta$, we have
\begin{eqnarray}\label{E3}
\frac{1}{|Q(x_{0},l)|}\int_{Q(x_{0},l)}|T_{j}u(x)|dx
&\lesssim&\|u\|_{L^{\infty}}(2^{-j\frac{n}{2}(1-\delta-\theta)}+2^{-j\theta(N-\frac{n}{2})}l^{-(N-\frac{n}{2})}).
\end{eqnarray}
\end{lem}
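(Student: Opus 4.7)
The plan is to mirror the rectangle-decomposition argument of Lemmas \ref{L2}--\ref{L4}, adapting the splitting scale to the large-cube regime $l\geq 1$. By (\ref{Tj}) and the fact that $J\sim 1$, it suffices to prove (\ref{E3}) for each $T^{\nu}_{j}$ separately. I will take
\[
R^{\nu}_{j}=\{y:|\nabla_{\xi}\varphi(x_{0},\xi^{\nu}_{0})-y|\leq\bar{c}\,l\,2^{j\theta}\}
\]
for a sufficiently large constant $\bar{c}$ to be chosen, and split $u=u^{\nu}_{j,1}+u^{\nu}_{j,2}$ with $u^{\nu}_{j,1}=u\kappa_{R^{\nu}_{j}}$. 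The radius $l\,2^{j\theta}$ is tuned precisely so that the near piece $T^{\nu}_{j}u^{\nu}_{j,1}$ produces the first term of (\ref{E3}) and the far piece $T^{\nu}_{j}u^{\nu}_{j,2}$ the second.

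For the near piece, the remark following Lemma \ref{L4} gives $\chi_{j}(\xi)a(x,\xi)\psi^{\nu}(\xi)\in S^{-\frac{n}{2}\delta}_{0,\delta}$ with symbol-class seminorms controlled by $2^{-j\frac{n}{2}(1-\delta)}$, so Lemma \ref{P1} yields $\|T^{\nu}_{j}u^{\nu}_{j,1}\|_{L^{2}}\lesssim 2^{-j\frac{n}{2}(1-\delta)}\|u^{\nu}_{j,1}\|_{L^{2}}$. Combining H\"{o}lder's inequality on $Q(x_{0},l)$ with $\|u^{\nu}_{j,1}\|_{L^{2}}\leq|R^{\nu}_{j}|^{1/2}\|u\|_{L^{\infty}}\lesssim(l\,2^{j\theta})^{n/2}\|u\|_{L^{\infty}}$ gives the required contribution $2^{-j\frac{n}{2}(1-\delta-\theta)}\|u\|_{L^{\infty}}$. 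This is exactly where the hypothesis $\theta<1-\delta$ is used, ensuring a genuine decay factor in $j$.

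For the far piece, since $\varphi\in\Phi^{2}$ we have $|\nabla_{\xi}\varphi(x,\xi^{\nu}_{0})-\nabla_{\xi}\varphi(x_{0},\xi^{\nu}_{0})|\leq C_{1,1}l$ for every $x\in Q(x_{0},l)$, while every $y\in\,{}^{c}R^{\nu}_{j}$ satisfies $|\nabla_{\xi}\varphi(x_{0},\xi^{\nu}_{0})-y|\geq\bar{c}\,l\,2^{j\theta}$. Fixing $\bar{c}$ large relative to $C_{1,1}$ (legitimate since $2^{j\theta}\geq 1$) secures $|\nabla_{\xi}\varphi(x,\xi^{\nu}_{0})-y|\gtrsim|\nabla_{\xi}\varphi(x_{0},\xi^{\nu}_{0})-y|$. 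Repeating the Cauchy--Schwarz plus $N$-fold integration-by-parts argument of Lemma \ref{L2} (using Parseval together with $\|\partial^{N}_{\xi}a^{\nu}_{j}\|_{L^{\infty}}\lesssim 2^{-jn/2}$ and $|A_{j}\cap\Gamma^{\nu}_{0}|\lesssim 2^{jn}$) bounds $|T^{\nu}_{j}u^{\nu}_{j,2}(x)|$ by $\|u\|_{L^{\infty}}(l\,2^{j\theta})^{n/2-N}=\|u\|_{L^{\infty}}2^{-j\theta(N-\frac{n}{2})}l^{-(N-\frac{n}{2})}$, uniformly in $x\in Q(x_{0},l)$. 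Averaging over $Q(x_{0},l)$ and summing the $O(1)$ contributions in $\nu$ then yields (\ref{E3}). The main delicate point is the selection of $\bar{c}$: unlike in Lemmas \ref{L2}--\ref{L4}, where the decoupling radius was small, here $l\,2^{j\theta}$ grows linearly in $l$, and one must confirm that the linear-in-$l$ phase shift coming from $\varphi\in\Phi^{2}$ is absorbed by this enlarged separation scale.
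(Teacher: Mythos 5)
Your argument is correct and matches the paper's own proof essentially step for step: the same cut-off rectangle $\bar{R}^{\nu}_{j}$ of radius $\bar{c}\,l\,2^{j\theta}$, the same near/far split of $u$, the $L^{2}$ bound from Lemma~\ref{P1} (with $\chi_j a\psi^\nu\in S^{-\frac{n}{2}\delta}_{0,\delta}$ and operator norm $\lesssim 2^{-j\frac{n}{2}(1-\delta)}$) for the near piece, and the Cauchy--Schwarz plus $N$-fold integration-by-parts plus Parseval argument for the far piece. Even your remark about $\bar{c}$ is the paper's: since $2^{j\theta}\geq 1$, one has $\bar{c}\,l\,2^{j\theta}\geq\bar{c}\,l\geq\frac{\bar{c}}{C_{1,1}}|\nabla_\xi\varphi(x,\xi^{\nu}_{0})-\nabla_\xi\varphi(x_{0},\xi^{\nu}_{0})|$, which is exactly the chain the paper writes before fixing $\bar{c}$.
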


\begin{proof}
By the decomposition (\ref{Tj}) and the fact that $J\sim1$, it is sufficient to show that $T^{\nu}_{j}$ meets the inequality (\ref{E3}).
To this end, set
\begin{equation}\label{R3}
\Bar{R}^{\nu}_{j}=\{y:|\nabla_{\xi}\varphi(x_{0},\xi^{\nu}_{0})-y|\leq\bar{c}l2^{j\theta}\}.
\end{equation}
\begin{eqnarray*}
u^{\nu}_{j,3}(x)=u(x)\chi_{\Bar{R}^{\nu}_{j}}(x) \quad{\rm and} \quad u^{\nu}_{j,4}(x)=u(x)-u^{\nu}_{i,3}(x),
\end{eqnarray*}
where $\chi_{\Bar{R}^{\nu}_{j}}(x)$ is the characteristic function of the rectangle $\Bar{R}^{\nu}_{j}.$
So one can write
$$T^{\nu}_{j}u(x)=T^{\nu}_{j}u^{\nu}_{j,3}(x)+T^{\nu}_{j}u^{\nu}_{j,4}(x).$$

Notice that $\chi_j(\xi)a(x,\xi)\psi^\nu(\xi)\in S^{-\frac{n}{2}\delta}_{0,\delta}$ with bounds $\lesssim 2^{-j\frac{n}{2}(1-\delta)}$. H\"{o}lder's inequality and the $L^{2}$-estimate of $T_{j}$ give that
\begin{eqnarray}\label{E4}
\frac{1}{|Q|}\int_{Q}|T^{\nu}_{j}u^{\nu}_{j,3}(x)|dx\lesssim 2^{-j\frac{n}{2}(1-\delta)}l^{-\frac{n}{2}}\|u^{\nu}_{j,3}\|_{L^2}\lesssim 2^{-j\frac{n}{2}(1-\delta-\theta)}\|u\|_{L^{\infty}}.
\end{eqnarray}

Note that if $x\in Q,$ then by the fact that $\varphi\in \Phi^{2}$
$$|\nabla_\xi\varphi(x,\xi^{\nu}_{0})-\nabla_\xi\varphi(x_{0},\xi^{\nu}_{0})|\leq C_{1,1}|x-x_{0}|\leq C_{1,1}l.$$
For any $y\in~^{c}\Bar{R}^{\nu}_{j}$, one can get further
 \begin{equation*}
|\nabla_\xi\varphi(x_{0},\xi^{\nu}_{0})-y|>\bar{c}l2^{j\theta}>\bar{c}l\geq\frac{\bar{c}}{C_{1,1}}|\nabla_\xi\varphi(x,\xi^{\nu}_{0})-\nabla_\xi\varphi(x_{0},\xi^{\nu}_{0})|.
\end{equation*}
Taking $\bar{c}$ large enough, we therefore have
\begin{eqnarray*}
|\nabla_\xi\varphi(x,\xi^{\nu}_{0})-y|
&\geq &|\nabla_\xi\varphi(x_{0},\xi^{\nu}_{0})-y|-|\nabla_\xi\varphi(x_{0},\xi^{\nu}_{0})-\nabla_\xi\varphi(x,\xi^{\nu}_{0})|\\
&\gtrsim & |\nabla_\xi\varphi(x_{0},\xi^{\nu}_{0})-y|.
\end{eqnarray*}
Inserting this inequality into
$$T_j^\nu u^{\nu}_{j,4}(x)= \int_{^{c}\Bar{R}^{\nu}_{j}}T_j^\nu(x, y)u(y)dy,$$
one can get that for any $x\in Q,$
\begin{eqnarray*}
|T^{\nu}_{j}u^{\nu}_{j,2}(x)|
&\leq& \|u\|_{L^{\infty}}
\int_{^{c}\Bar{R}^{\nu}_{j}}\frac{|\nabla_\xi\varphi(x,\xi^{\nu}_{0})-y|^{N}}{|\nabla_\xi\varphi(x_{0},\xi^{\nu}_{0})-y|^{N}}|T^{\nu}_{j}(x,y)| dy.
\end{eqnarray*}
H\"{o}lder's inequality, Integrating by parts  and Parseval's identity give that
\begin{eqnarray}\label{E5}
|T^{\nu}_{j}u^{\nu}_{j,2}(x)|
&\leq& \|u\|_{L^{\infty}}\big(\int_{^{c}\Bar{R}^{\nu}_{j}}\frac{1}{|\nabla_\xi\varphi(x_{0},\xi^{\nu}_{0})-y|^{2N}} dy\big)^{\frac{1}{2}}
\big(\int_{\mathbb{R}^{n}}\big|\widehat{\partial^{N}_{\xi}a_j^\nu(x, \cdot)}(y
)\big|^{2} dy\big)^{\frac{1}{2}}\nonumber\\
&\lesssim& \|u\|_{L^{\infty}}\big(l2^{j\theta}\big)^{\frac{n}{2}-N}
2^{-j\frac{n}{2}}
|A_{j}\cap\Gamma^{\nu}_{0}|^{\frac{1}{2}}
\lesssim2^{-j\theta(N-\frac{n}{2})}l^{-(N-\frac{n}{2})}\|u\|_{L^{\infty}},
\end{eqnarray}
where the fact that $|A_{j}\cap\Gamma^{\nu}_{0}|\lesssim 2^{jn}$ and (\ref{Ec}) are used. (\ref{E4}) and (\ref{E5}) imply that $T^{\nu}_{j}$ meets the inequality (\ref{E3}). So the proof is finished.

\end{proof}
\section{The proof of main result}
Based on the lemmas above and main idea in \cite{W}, one can finish the proof easily.

%
%

\begin{proof}[Proof of Theorem~\ref{T1}]
Without loss of generality we assume that the symbol $a(x,\xi)$ vanishes for $|\xi|\leq 2$. For the low frequency portion of FIOs, refer to \cite [Theorem 1.18]{Wolfgang}. Let $Q=Q(x_{0},l)$ be a cube centered at $x_{0}$ with side length $l>0$. We aim to prove the estimate
\begin{equation}\label{E0}
\frac{1}{|Q|}\int_{Q}|T_{a,\varphi}u(x)-C_{Q}|dx\lesssim \|u\|_{L^{\infty}},
\end{equation}
where $C_{Q}=\frac{1}{|Q|}\int_{Q}T_{a,\varphi}u(y)dy$. To this end, we decompose the operators $T_{a,\varphi}$ as in (\ref{E1}). If $l>1$, then by Lemma (\ref{L5}), the left-hand side of the above expression can be controlled by
\begin{eqnarray*}
\sum\limits_{j}\frac{2}{|Q(x_{0},l)|}\int_{Q(x_{0},l)}|T_{j}u(x)|dx
&\lesssim&\|u\|_{L^{\infty}}\sum\limits_{j}(2^{-j\frac{n}{2}(1-\delta-\theta)}+2^{-j\theta(N-\frac{n}{2})}l^{-(N-\frac{n}{2})})\\
&\lesssim&\|u\|_{L^{\infty}}.
\end{eqnarray*}

It  therefore remains to handle the case where $l\leq1$. In this case, we can control the left-hand side of (\ref{E0}) by
$$
\sum\limits_{1<2^{j}}\frac{1}{|Q|^{2}}\int_{Q}\int_{Q}|T_{j}u(x)-T_{j}u(z)|dzdx.
$$
We split the sum as
\begin{eqnarray}\label{E13}
\sum\limits_{2^{j}<l^{-1}}
+\sum\limits_{l^{-1}<2^{j}}.
\end{eqnarray}
We analyze each part.
For the first sum, using the fact that
$$|T_{j}u(x)-T_{j}u(z)|\leq\int_{\mathbb{R}^{n}}|T_{j}(x,y)-T_{j}(z,y)|dy$$
and Lemma \ref{L1}, we get that this sum is bounded by
$$\sum\limits_{2^{j}<l^{-1}}2^{j}l\|u\|_{L^{\infty}}\lesssim\|u\|_{L^{\infty}}.$$
For the second sum, we apply the following estimate:
\begin{equation*}
\begin{array}{c}
  \displaystyle\sum\limits_{l^{-1}<2^{j}}\frac{1}{|Q|^{2}}\int_{Q}\int_{Q}|T_{j}u(x)-T_{j}u(z)|dzdx\leq \sum\limits_{l^{-1}<2^{j}}\frac{2}{|Q|^{2}}\int_{Q}|T_{j}u(x)|dx.
\end{array}
\end{equation*}
If $\delta=0$, then Lemma \ref{L2} gives the bound
$$\sum\limits_{l^{-1}<2^{j}}2^{-j\frac{n}{2}(1-\frac{n}{2N})}l^{\frac{n}{2}(\frac{n}{2N}-1)}\|u\|_{L^{\infty}}
\lesssim \|u\|_{L^{\infty}}.$$
If $0<\delta<1$, we further break up this sum as
\begin{equation}\label{E2}
(\sum\limits_{l^{-1}\leq2^{j}\leq l^{-\frac{1}{1-\delta}}}
+\sum\limits_{l^{-\frac{1}{1-\delta}}<2^{j}})\frac{2}{|Q|^{2}}\int_{Q}|T_{j}u(x)|dx.
\end{equation}
Using  Lemma \ref{L4}, the second term can be controlled by $\|u\|_{L^{\infty}}$. For the first term, we write
\begin{eqnarray*}
\sum\limits_{l^{-1}<2^{j}\leq l^{-\frac{1}{1-\delta}}}\frac{2}{|Q|}\int_{Q}|T_{j}u(x)|dx
&=&\big(\sum\limits_{l^{-1}<2^{j}\leq l^{-\frac{1}{\delta}}}
+\sum\limits_{l^{-\frac{1}{\delta}}<2^{j}\leq l^{-\frac{1}{\delta^{2}}}}+...
+\sum\limits_{l^{-\frac{1}{\delta^{k-1}}}<2^{j}\leq l^{-\frac{1}{\delta^{k}}}}\\
&+&...
+\sum\limits_{l^{-\frac{1}{\delta^{\gamma-1}}}<2^{j}\leq \min\{l^{-\frac{1}{1-\delta}},l^{-\frac{1}{\delta^{\gamma}}}\}}\big)
\frac{1}{|Q|}\int_{Q(x_{0},l)}|T_{j}u(x)|dx,
\end{eqnarray*}
where $\gamma$ is the first positive integer such that $\frac{1}{\delta^{\gamma}}\geq \frac{1}{\varrho}$.
Taking $\lambda=\frac{1}{\delta^{k}}$, $k=0,1,...,\gamma-1$ in Lemma \ref{L3}, we see that each sum above is bounded by $\|u\|_{L^{\infty}}$. Thus, we conclude that
\begin{eqnarray*}
\sum\limits_{l^{-1}<2^{j}\leq l^{-\frac{1}{1-\delta}}}\frac{2}{|Q|}\int_{Q}|T_{j}u(x)|dx
\leq C_{\gamma}\|u\|_{L^{\infty}}.
\end{eqnarray*}
Therefore, the proof is completed.
\end{proof}

\bibliographystyle{Plain}

\end{document}